\newcommand{\ord}{\operatorname{ord}}
\newenvironment{proof}[1][Proof]{\noindent\textbf{#1.} }{\ \rule{0.5em}{0.5em}}
\newtheorem{theorem}{Theorem}
\newtheorem{conjecture}{Conjecture}
\newtheorem{definition}{Definition}
\newtheorem{example}{Example}[section]
\newtheorem{remark}{Remark}
\newcommand{\bbC}{\mathbb{C}}
\newcommand{\bbD}{\mathbb{D}}
\newcommand{\bbN}{\mathbb{N}}
\newcommand{\bbR}{\mathbb{R}}
\newcommand{\bfa}{\mathbf{a}}
\newcommand{\bfb}{\mathbf{b}}
\newcommand{\bfc}{\mathbf{c}}
\newcommand{\bff}{\mathbf{f}}
\newcommand{\bfr}{\mathbf{r}}
\newcommand{\bfx}{\mathbf{x}}
\newcommand{\bfy}{\mathbf{y}}
\newcommand{\bfz}{\mathbf{z}}
\newcommand{\bfF}{\mathbf{F}}
\newcommand{\cA}{\mathcal{A}}
\newcommand{\cB}{\mathcal{B}}
\newcommand{\cG}{\mathcal{G}}
\newcommand{\cM}{\mathcal{M}}
\newcommand{\cO}{\mathcal{O}}
\newcommand{\cP}{\mathcal{P}}
\newcommand{\Ffrak}{\mathfrak{F}}
\newcommand{\Rfrak}{\mathfrak{R}}
\newcommand{\argmin}{\operatornamewithlimits{arg \, min}}
\newcommand{\argmax}{\operatornamewithlimits{arg \, max}}
\newcommand\pa{\partial}
\title{An Enhancement Algorithm for Cyclic Adaptive Fourier Decomposition}
\author{
Tao Qian
\thanks{Department of Mathematics, University of Macau, Macao, e-mail: fsttq@umac.mo} \ and \  
Jianzhong Wang
\thanks{Department of Mathematics and Statistics, Sam Houston State University, Huntsville, TX 77341, e-mail: jzwang@shsu.edu}\\
}
\date{April 30, 2018}
\begin{document}
\maketitle
\begin{abstract}
	One important problem in the theory of Hardy space is to find the best rational approximation of a given order to a function in the Hardy space $H^2$ on the unit disk. It is equivalent to finding the best Blaschke form with free poles. The cyclic adaptive Fourier decomposition method is based on the grid search technique. Its approximative precision is limited by the grid spacing. This paper propose an enhancement of the cyclic adaptive Fourier decomposition. The new method first changes the rectangular grids to the polar ones for cyclic adaptive Fourier decomposition, so that the decomposition enables us to employ fast Fourier transformation to accelerate the search speed. Furthermore, the proposed algorithm utilizes the gradient descent optimization to tune the best pole-tuple on the mesh grids, reaching higher precision with less computation cost. Its validity and effectiveness are confirmed by several examples.
\end{abstract}

\textbf{Key words.} Best Rational Approximation, Blaschke Products, Hardy Space, Gradient Descent Method, Fast Fourier Transform

\textbf{2010 AMS subject classifications.} 41A20, 30H10, 30J10, 65T50

\section{Introduction}
Throughout the paper, we denote by $\bbD$ the open unit disc, and $H^2=H^2({\bbD})$ the Hardy $H^2$-space on ${\bbD}$:
\[ H^2 =\{ f(z)=\sum_{k=0}^\infty c_kz^k\ :\ \sum_{k=0}^\infty |c_k|^2<\infty\},\]
which is equipped with the inner product
$$ \langle f,g\rangle = \frac{1}{2\pi i}\oint_{\partial\bbD} f(z)\overline{zg(z)}\, dz = \frac{1}{2\pi i}\oint_{\partial\bbD}\frac {f(z)\overline{g(z)}}{z}\, dz,\quad f,g\in H^2. $$

Now we recall the classical definition of \textit{best n-rational approximation}. Let $p$ and $q$ be polynomials, and all zeros of $q$ be outside the closed unit disc. In this paper, we always assume that $p$ and $q$ are coprime so that all rational functions are in the form of $p/q$ is \textit{non-degenerate}. The \textit{order of a rational function} $p/q$ is defined by $\ord(p/q)=\max \{{\rm deg }(p), {\rm deg }(q)\}.$
A \textit{best $n$-rational approximation} to $f\in H^2$ is an $n$-order rational function $p_1/q_1$ that satisfies
\begin{equation}\label{rational best}
\| f-p_1/q_1\| \leq \| f- p/q\|,\quad \ord(p/q)\le n.
\end{equation}
An important type of rational approximation is Blaschke form approximation, which is briefly introduced in the following:
For a given  $n$-vector $\bfa=[a_1,\cdots,a_n]^T\in \bbD^n$, the $n$-Takenaka-Malmquist orthonormal rational function system $\{B_k\}_{k=1}^n$ is defined by
\begin{equation}\label{eq1}
B_k(z)=\frac{\sqrt{1-|a_k|^2}}{1-\bar{a}_k z}\prod_{j=1}^{k-1}\frac{z-a_j}{1-\bar{a}_jz}.
\end{equation}
In this paper, we assume that all of the components of the $n$-vector $\bfa$ are distinct.
We denote by $L(\bfa)$ the linear subspace of $H^2$ spanned by $\{B_k\}_{k=1}^n$, and call a function in $L(\bfa)$ being of the \emph{$n$-Blaschke form}. Then the orthogonal projection of $f\in H^2$ to $L(\bfa)$ is the $n$-Blaschke form
\begin{equation}\label{neww1}
f_n=\sum_{k=1}^n \langle f,B_k\rangle B_k.
\end{equation}
It is known that the linear space $L(\bfa)$ is invariant under the permutations of $\bfa$. Hence, we often identify $L(\bfa)$ with $L(\cA)$,  where $\cA$ is the $n$-tuple $\{a_1,a_2,\cdots,a_n\}$.
We define the squared $H^2$-error of the projection (\ref{neww1}) by
\begin{equation}\label{herr}
 A(f;\cA)=\|f-f_n\|^2=\|f\|^2-\sum_{k=1}^{n}|\langle f,B_k\rangle|^2,
 \end{equation}
and call $n$ \emph{the approximation degree}, and $E(f,\cA)=\sum_{k=1}^{n}|\langle f,B_k\rangle|^2$  \emph{the energy of $f$} (at $\cA$).
We say that an $n$-tuple $\cB=\cB(n)=\{b_1,b_2,\cdots,b_n\}$ induces the \emph{$n$-best Blaschke form approximation} to $f$ if and only if
\begin{equation}\label{eq22}
\cB=\argmin_{\cA\subset \bbD} A(f,\cA),
\end{equation}
or equivalently,
\begin{equation}\label{eq222}
\cB=\argmax_{\cA\subset \bbD} E(f,\cA).
\end{equation}
Later, $\cB$ is called the \emph{best $n$-tuple} (for the approximation). The relation between $n$-Blachke form approximation and $n$-order rational one was specified in \cite{Qian13}. The methods and algorithms for both of them are quite similar.   Hence, in this paper we mainly focus on the $n$-best Blachke form approximation. More detailed discussions on the properties of $n$-best Blachke form approximations and the corresponding best $n$-tuples can be found in \cite{MQW, QianW11, QianW13, QianZL11, Qian13}.

Rational approximation is of a great significance in both pure and applied mathematics. As an example, in system identification, one wishes to approximate the system function by rational ones.

Although the study of the $n$-best rational approximation has a long history \cite{Ba, Wa, MQW, QianW13}, practical algorithms for finding the approximation are still under research. In literature, Baratchart's group in \cite{BCO,FM} proposed the method  based on the second
derivative test, treating the coefficients of the polynomial $q$ as the parameters. Qian's group proposed the adaptive Fourier decomposition algorithm (AFD) \cite{QianZL11} and its improvement \emph{cyclic AFD} (shortly, CAFD) \cite{Qian13}, which used the poles of the approximative rational function as parameters. They created a search scheme to find the best tuple over the rectangular grids (\cite{BCO,DMA}).

In a pole-tuple search algorithm, the time cost is a main issue. If a 1-D rectangular $\epsilon$-net $\cM\subset\bbD$ has $N\times M$ nodes, an exhaustive search on the $n$-dimensional net $\cM^n$ needs $O(N^{2n}M^{n})$ times of operations, which is unpractical when the approximation degree $n$ is high. To reduce the computational cost, Qian introduced \emph{coordinate maximum} \cite[Definition 1]{Qian13} and proved that it is identical with the best $n$-tuple if the target function satisfies a certain condition. Based on this fact, AFD suggests $n$ rounds of coordinate-by-coordinate search on $\cM$. In each round, it finds the maximum only for a variable over the 1-D mesh:
\begin{equation*}
\hat{b}_j=\argmax_{a_j\subset \cM} E(f,\cA)\quad 1\le j\le n.
\end{equation*}
Therefore, AFD needs only $O(N^2M)$ operations, dramatically reducing the time used in an exhaustive search.

Unfortunately, AFD usually provides a notable error. CAFD improves AFD by running several cycles of AFD till the coordinate maximum on $\cM^n$ is obtained.

In CAFD, the accuracy of the best $n$-tuple approximation is limited by the grid gap $\epsilon$. To improve the approximation, we must choose a smaller $\epsilon$, which increases the computational time significantly.

To overcome the limitation of grids search technique and to reduce the time cost, in this paper, we propose a new algorithm CAFD-CGD to enhance CAFD. The proposed algorithm consists of two phases: Firstly, we utilize the polar-type mesh grids in CAFD, and employ the best $n$-tuple on the mesh as initial $n$-tuple for the second phase. Secondly, we launch a gradient decent optimization in a neighborhood of the initial $n$-tuple to tune the best $n$-tuple. The new algorithm has two advantages: First, the polar-grid set enables us to employ the fast Fourier transformation (FFT), reducing the search time from $O(N^2M)$ to $O(NM \log N)$. Second, theoretically, the complex gradient descent method (CGD) can exactly locate the best $n$ tuple in $\mathbb{D}^n$, breaking through the limitation of the grid gap.

The paper is organized as follows: In Section~\ref{GDM}, we develop the \emph{complex gradient decent algorithm} (CGD) to tune the initial best $n$-tuple found on a mesh grid set. In Section~\ref{MSP}, we first study the uniqueness of the best $n$-tuple. Then we introduce the  polar-type mesh grids and apply CAFD to find the best $n$-tuple on the grids.   In Section~\ref{feval}, utilizing FFT, we develop the novel \emph{fast evaluation algorithm}(FEVAL) for the case that a function $f\in H^2$ is given by its samples on the unit circle, and show that the new search scheme reduces cost of CAFD from $O(N^2M)$ to $O(NM\log N)$. In the last section, we give several illustrative examples to show the effectiveness and accuracy of our algorithm.

\section{Complex Gradient Descent Algorithm (CGD)}\label{GDM}
We first introduce some notions and notations. For $\bfz=[z_1, \cdots, z_n]^T\in \bbC^n$, its conjugate is denoted by $\bar{\bfz}=[\bar{z}_1,\cdots,\bar{z}_n]^T\in\bbC^n$.  Write $\bfz=\bfx +i\bfy$, $\bfr=[\bfx,\bfy]$, and $\bfc=[\bfz,\bar{\bfz}]$. Hence, a complex function $f(\bfz):\bbC^n\to \bbC,$ with a little abuse of notation, has the following different forms:
\[f(\bfz)=f(\bfz,\bar{\bfz})=f(\bfc)=f(\bfx,\bfy)=f(\bfr).
\]
As usual, we define the cogradient operator by $\frac{\pa}{\pa \bfz}=\left[\frac{\pa}{\pa {z}_1},\frac{\pa}{\pa {z}_2}, \cdots, \frac{\pa}{\pa {z}_n}\right]$, the conjugate cogradient operator by $\frac{\pa}{\pa \bar{\bfz}}=\left[\frac{\pa}{\pa \bar{z}_1},\frac{\pa}{\pa \bar{z}_2}, \cdots, \frac{\pa}{\pa \bar{z}_n}\right]$,
and the gradient of a differentiable function $f(\bfz,\bar{\bfz})$ by $\nabla_\bfz f=\left(\frac{\partial f}{\partial \bfz}\right)^H$, where $(\cdot)^H$ denotes the Hermitian transpose. We also denote  by $\langle\bfa,\bfb\rangle=\bfa^H\bfb$ the inner product of two complex $n$-vectors $\bfa,\bfb\in \bbC^n$.

We now assume that $\bfa\in\bbC^n$ is a local minimal-value point of a real-valued function $g(\bfz)$. The complex gradient descent method for finding  $\bfa$ is the following:  Let $\bfa_0$ be the initial guess, who resides in a neighborhood of $\bfa$. We find $\bfa$ as the limit of the sequence of $(\bfa_k)$:
\begin{equation}\label{eq3c}
\bfa_{k+1}=\bfa_k- t_k\nabla g(\bfa_k),\quad k=0,1,2,\cdots,
\end{equation}
where $\nabla g$ in (\ref{eq3c}) is Lipschitz continuous with constant $L>0$, i.e.,
\begin{equation}\label{eqLip}
    \|\nabla g(\bfa)-\nabla g(\bfb)\|\le L\|\bfa-\bfb\|.
\end{equation}

In this paper, we adopt \emph{backtracking line search}, in which a fixed  $\beta, 0<\beta<1,$ is employed for formulating $t_k$ by $t_k=\beta t_{k-1}, t_1=1$. Then the iteration is terminated when
\begin{equation}\label{bline}
g(\bfa_k-t_{k}\nabla g(\bfa_k) )>g(\bfa_k)-\frac{t_k}{2}\|\nabla g(\bfa_k)\|^2.
\end{equation}
We call the method above \emph{Complex Gradient Descent Method (CGD)}.
The convergence theorems of the real gradient descent methods in \cite{BoydV04, HastieTF09, Nesterov04} can also be applied for CGD. Because

In our problem, we will set $g=-E$ in (\ref{eq3c}), where $E(\bfa)$ is the energy function of $f$. Besides, to guarantee that after each iterative step the new $n$-tuple $\bfa_k$ is still in $\bbD^n$, $t_k$ in (\ref{bline}) must also satisfy
$$\bfa_k+t_k\nabla E(\bfa_k)\in \mathcal{N}_{\bfa_k}\cap \bbD^n,$$
where
$\mathcal{N}_{\bfa_k}=\left\{\bfz; \|(a_k)_j-z_j\|<r,\quad 1\le j\le n \right\}.$

An effective formulation for the gradient $-\nabla E(=\nabla A)$ is the key step in CGD. We now establish such a formulation as follows. Write $$e_a(z)=\frac{\sqrt{1-|a|^2}}{1-\bar{a}z}, \quad a\in\bbC\setminus\{0\},
$$
and let $P_\ell$ be the permutation of the index set $\{1,2,\cdots,n\}$ such that $P_\ell(n)=\ell$. For a given analytic function $f\in H^2$,  we inductively define $n$ functions $f_{P_\ell(j)}, 1\le j\le n,$ by
\begin{eqnarray}
f_{P_\ell(1)}(z) & = & f(z),\nonumber \\
f_{P_\ell(j)}(z) & = & \frac{1-z\bar{a}_{P_\ell(j-1)}}{z-a_{P_\ell(j-1)}} \left(f_{P_\ell(j-1)} (z)-\langle f_{P_\ell(j-1)},e_{a_{P_\ell(j-1)}}\rangle e_{a_{P_\ell(j-1)}}(z)\right).  \label{fhatj}
\end{eqnarray}
It was proved that all $f_{P_\ell(j)}(z)$ are analytic in $\bbD$ \cite{QianW11}.
Since the energy function $E(\bfa)$ is invariant under the permutation, it has $n$ different representations:
\begin{equation}\label{ferr1}
E(\bfa)=\sum_{j=1}^n \left( 1-|a_{P_\ell(j)}|^2 \right) \left|f_{P_\ell(j)} (a_{P_\ell(j)})\right|^2, \quad  \ell= 1, \cdots, n.
\end{equation}
Note that the variable $a_\ell = a_{P_\ell(n)}$ only occurs in the last term of the sum in (\ref{ferr1}). Since $f_{P_\ell(n)}$ is analytic,  we have $\frac{\overline{f_{P_\ell(n)}}}{\partial z_\ell}=0$, so that
\begin{equation}\label{dAk}
\frac{\partial(-E(\bfa))}{\partial z_\ell}=\overline{f_{P_\ell(n)}(a_\ell)}\left(\overline{a_\ell}f_{P_\ell(n)}(a_\ell)-(1-|a_\ell|^2)f^\prime_{P_\ell(n)}(a_\ell)\right),\quad \ell=1,\cdots, n,
\end{equation}
where $f^\prime_{P_\ell(n)}$ can be computed by the following recursive formula:
\begin{align*}
f^\prime_{P_\ell(1)}(z) & =f^\prime(z),\\
   f^\prime_{P_\ell(j)}(z) & =f^\prime_{P_\ell(j-1)}(z)\frac{1-\overline{a_{P_\ell(j-1)}}z}{z-a_{P_\ell(j-1)}} + \left(f_{P_\ell(j-1)}(z)-f_{P_\ell(j-1)}(a_{P_\ell(j-1)})\right)\frac{|a_{P_\ell(j-1)}|^2-1}{(z-a_{P_\ell(j-1)})^2}.
\end{align*}

\begin{remark}
The permutation $P_\ell$ above is not unique. In practice, we adopt $P_\ell=P^\ell$, where $P$ is the 1-\emph{shift} permutation: $P(1,2,\cdots,n)=(2,\cdots,n,1)$.
\end{remark}

We now present the pseudo-code of  CGD in \textbf{Algorithm~\ref{alg1}}.
Its inputs consist of the function $f\in H^2$, an initial $n$-tuple $\bfa$, which is near the best $n$-tuple $\bfb$, a tolerance $\varepsilon$, and the parameter $\beta$ in the backtracking line search. Here, the initial $n$-tuple $\bfa$ is found by \textbf{Algorithm~\ref{alg2} (ITS)} in the next section. When the tolerance condition $\|\nabla E(\bfb)\|^2<\varepsilon$ holds, we terminate the algorithm and output the $n$-tuple $\bfb$.

\begin{algorithm}[htbp]\caption{ \textbf{CGD}: Complex gradient descent algorithm for finding the best tuple} \label{alg1}
     \begin{algorithmic}[1]

    \REQUIRE
    $\bfa,f,\beta$, neighbor size $r$, and the tolerance $\varepsilon$.

    \STATE Compute $\nabla E(\bfa)$ using $f$ and $\bfa$.
    \WHILE{$\|\nabla E(\bfa)\|^2 >\varepsilon$}
        \STATE Find $s_1>0, s_2>0$ such that $\|\bfa + s_1 \nabla E(\bfa)\|_\infty=1$ and $\|s_2 \nabla E(\bfa)\|_\infty=r$. Set $s=min(s_1,s_2)$.
        \STATE Compute $\bfc = \bfa + s \nabla E(\bfa)$.
        \WHILE{$E(\bfc)< E(\bfa)+\frac{s}{2}\|\nabla E(\bfa))\|^2$}
            \STATE Update $s$:  $s = \beta s$.
            \STATE Update $\bfc$: $\bfc = \bfa + s \nabla E(\bfa)$.
        \ENDWHILE
        \STATE Update $\bfa$: $\bfa =\bfc$.
        \STATE Re-compute  $\nabla E(\bfa)$.
    \ENDWHILE
    \STATE Set the output: $\bfb =\bfa$.
    \ENSURE  $\bfb$
     \end{algorithmic}
\end{algorithm}

\section{CAFD on a Polar Mesh Grid}\label{MSP}
A main difficulty for best $n$-tuple search algorithms is that the energy function $E(\bfa)$ lacks the uniqueness of the global maximum (or, equivalently, the error function $A(\mathbf{a})$ lacks the uniqueness of the global minimum). Therefore, a coordinate maximum of $E(\bfa)$ is not necessarily a global one. Besides, lack of the uniqueness of the global minimum discourages the approach of globally convex optimization.
\begin{example}
Let $f(z)=z^k, k\in\bbN$. We consider its Blaschke-form approximation of degree $1$. By (\ref{ferr1}), the error function $A(\bfa)$ has the form of
\[ \|f\|^2-(1-|a|^2)|f(a)|^2=1-(1-|a|^2)|a|^{2k},
\]
which reaches the global minimum when $|a|=\sqrt{\frac{k}{k+1}}$.
\end{example}
The example shows that, when the approximation degree is one, the global minimum may occur on a manifold.

When the approximation degree $n>1$, due to the invariance of the energy function $E(\bfz)$ under the permutations of $\bfz$,  $E(\bfz)$ reaches its global maximum at least at $n!$ distinct points in $\bbD^n$, of which each is a permutation of another one. This fact totally denies the uniqueness of the $n$-best tuple. When $n>1$, we have the following result on the set of best $n$-tuples.
\begin{theorem}\label{th3.1}
When $n>1$, the set of best $n$-tuples of the Blaschke-form approximation of degree $n$ for a function $f\in H^2$ cannot contain a continuous curve in $\bbD^n$.
\end{theorem}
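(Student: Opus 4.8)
The plan is to argue by contradiction: assume the set $\cB$ of best $n$-tuples contains a non-constant continuous curve $\gamma$ in $\bbD^n$. By the standing assumption the components stay distinct and nonzero along $\gamma$ (after passing to a subarc), so $E$ is real-analytic in a neighborhood of $\gamma$ and every point of $\gamma$ is an interior global maximizer; hence $\gamma$ lies in the critical set $\{\nabla E=0\}$. Since both this critical set and the level set $\{E=E_{\max}\}$ are real-analytic (semianalytic), I would first promote the merely continuous curve to a real-analytic arc $\gamma(t)$ (e.g. by the curve-selection lemma), so that the differential identities below may be used freely.

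The reduction step uses the permutation-invariant representations (\ref{ferr1}). Fixing $\ell$ and freezing every coordinate except $a_\ell$ at its value on $\gamma$, the representation (\ref{ferr1}) shows that $E$ equals a constant independent of $a_\ell$ plus $(1-|a_\ell|^2)\,|f_{P_\ell(n)}(a_\ell)|^2$, since $f_{P_\ell(n)}$ is built from the other coordinates only. Thus each component $a_\ell(t)$ must globally maximize the \emph{one-variable} functional $\phi(a)=(1-|a|^2)|u(a)|^2$ with analytic symbol $u=f_{P_\ell(n)}$ — exactly the $n=1$ problem of the preceding Example. The first-order condition (\ref{dAk}), at a point where $f_{P_\ell(n)}(a_\ell)\neq 0$ (which must hold, since otherwise the $\ell$-th term carries no energy and the tuple is not of genuine degree $n$), reads $\bar a_\ell\,u(a_\ell)=(1-|a_\ell|^2)\,u'(a_\ell)$.

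The analytic heart of the argument is the following claim about the one-variable problem: the maximizer set of $\phi$ contains an arc only when $u$ is a monomial $cz^{k}$, in which case the arc lies on the circle $|z|=\sqrt{k/(k+1)}$, precisely the radial degeneracy of the Example. I would establish this by combining constancy of $\phi$ on the arc with the first-order relation rewritten as $u'(a)=\bar a\,(a\,u(a))'$; this exhibits $\bar a$ as an analytic function of $a$ on the arc, and the identity theorem together with the Schwarz-reflection structure forces $u'/u=k/((1-\rho^2)a)$, i.e. $u(z)=cz^{k}$ with $\rho^2=k/(k+1)$. Consequently, a non-constant coordinate of $\gamma$ would force the corresponding reduced symbol $f_{P_\ell(n)}$ to be a monomial.

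The main obstacle — and the step I expect to be delicate — is twofold. First is the \emph{coupling}: along $\gamma$ the remaining coordinates also move, so the symbol $f_{P_\ell(n)}$ is not literally fixed; I would address this by noting that for each $t$ the one-variable maximum is attained with value $E_{\max}$, so a persistent non-strict (degenerate) one-variable maximum along the arc still triggers the monomial conclusion at each instant, equivalently that the tangent $\gamma'(t)$ lies in the kernel of the negative-semidefinite Hessian and thereby selects a degenerate coordinate direction. Second, and most importantly, one must rule out the monomial scenario for $n>1$: since $f_{P_\ell(n)}$ is obtained from $f$ by removing $n-1\ge 1$ nontrivial M\"obius factors at distinct nonzero points, I would show that requiring it to be a pure monomial is incompatible with \emph{global} (not merely critical) optimality of the full $n$-tuple. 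This is exactly where the hypothesis $n>1$ enters, and closing this incompatibility rigorously is the crux of the proof.
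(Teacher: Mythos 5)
Your proposal follows essentially the same route as the paper's proof: rule out $f_{P_k(n)}(a_k)=0$ at a maximizer via \eqref{ferr1}, impose the first-order condition \eqref{df0} in each coordinate, project the putative curve onto a coordinate plane to obtain a genuine arc $\Gamma_n\subset\bbD$, and rewrite stationarity along it as the identity \eqref{dfn0}, $\frac{zf_n'(z)}{f_n(z)}=\frac{|z|^2}{1-|z|^2}$ on $\Gamma_n$, whence identity-theorem/reflection reasoning forces the reduced remainder to be a monomial $f_n(z)=rz^c$. Your ``monomial scenario'' is exactly the paper's intermediate conclusion. In fact you are more careful than the paper on two genuine subtleties it passes over silently: upgrading the merely continuous curve to an arc on which the differential identities can be used, and the coupling problem that $f_{P_\ell(n)}$ itself varies as the other coordinates move along the curve (the paper treats $f_n$ as fixed while $a_n$ ranges over $\Gamma_n$); though your proposed fix via the kernel of the Hessian is only a sketch, flagging the issue is to your credit.

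The genuine gap is at the end: you explicitly decline to carry out the one step that finishes the proof, writing that ruling out the monomial scenario for $n>1$ ``is the crux.'' The paper closes it in one line: by the construction \eqref{fhatj}, the reduced remainder $f_n=f_{P_n(n)}$ has at least $n$ distinct zeros in $\bbD$, while $rz^c$ vanishes only at the origin --- a contradiction precisely when $n>1$, which is where the hypothesis enters (for $n=1$ there is no contradiction, consistent with the example in which the maximizers fill a circle). Without some such zero-counting argument nothing in your proposal excludes the monomial case, so the theorem is not proved; your suggested closers (``incompatible with global optimality,'' ``the tangent lies in the kernel of the negative-semidefinite Hessian'') restate the goal rather than argue for it. To be fair, the paper itself asserts the zero count with no justification beyond a pointer to \eqref{fhatj}, so your instinct that this is the delicate point of the whole theorem is sound --- but a proof must supply it, and yours does not.
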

\begin{proof} Denote by $S$ the set of the best $n$-tuples of the approximation. Then $\nabla E(\bfa)=0$ for any $\bfa\in S$.  By (\ref{dAk}), $\frac{\partial E(\bfa)}{\partial z_k}=0$ if and only if either $f_{P_k(n)}(a_k)=0$, or
\begin{equation}\label{df0}
\overline{a_k}f_{P_k(n)}(a_k)-(1-|a_k|^2)f^\prime_{P_k(n)}(a_k)=0.
\end{equation}
By (\ref{ferr1}), $E(\bfa)$ does not have the maximum value when $f_{P_k(n)}(a_k)=0$. Therefore, if $\bfa\in S$, \eqref{df0} must hold for each $k$.

Assume there is a continuous curve $\Gamma\subset S$. Denote by $\Gamma_k$ the projection of $\Gamma$ on the $k$-th coordinate (complex) plane. Then at least one $\Gamma_k$ is a continuous curve in $\bbD$. Assume $k=n$  and $P_n=I$. Then by (\ref{df0}),
\begin{equation*}
\overline{a_n}f_{n}(a_n)-(1-|a_n|^2)f^\prime_{n}(a_n)=0, \quad  \forall a_n\in \Gamma_n.
\end{equation*}
It follows that
\begin{equation}\label{dfn0}
\frac{z f^\prime_{n}(z)}{f_{n}(z)}=\frac{|z|^2}{1-|z|^2}, \quad \forall z\in \Gamma_n.
\end{equation}
Hence, $g(z)=\frac{z f^\prime_{n}(z)}{f_{n}(z)}$ is real-valued and has no poles on $\Gamma_n$. Therefore, $g(z)$ is analytic and $g(z)=c$ on $\cO$, where $\cO$ is a connected domain with $\Gamma_n\subset \cO$. It follows that $f_n(z)=rz^c$ on $\bbD$. However, by the formula (\ref{fhatj}), $f_n(z)$ has at least $n$ distinct zeros in $\bbD$, which leads to a contradiction. The proof is completed.
\end{proof}
\smallskip

According to the theorem, we make the following conjecture:
According to Theorem~\ref{th3.1}, we make the following conjecture:
\begin{conjecture} When $n>1$, the energy function $E(\bfz)$ in (\ref{ferr1}) has exact $n!$ best $n$-tuples, i.e., all of its best $n$-tuples are permutations of a single one in $\mathbb{D}^n$.
\end{conjecture}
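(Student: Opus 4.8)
\smallskip

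The plan is to deduce the conjecture from Theorem~\ref{th3.1} in two stages: first promote ``contains no continuous curve'' to ``is a finite set,'' and then confront the genuinely harder assertion that this finite set is a single permutation orbit.

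\emph{Stage 1 (finiteness).} Write $S\subset\bbD^n$ for the set of best $n$-tuples and $M=\max_\bfa E(\bfa)$. First I would record that $E$ is real-analytic in the real coordinates $(\bfx,\bfy)$ on the open set of tuples with pairwise distinct entries, since each $|\langle f,B_k\rangle|^2$ is assembled from the real-analytic ingredients $a_j\bar a_j$, $\sqrt{1-|a_k|^2}$, and the Blaschke factors. Granting an a priori bound that confines the maximizers to a compact $K\subset\bbD^n$, away from both $\partial\bbD$ (where $e_{a_k}$ degenerates) and the diagonal (where $E$ loses a dimension as two poles collide), the set $S=\{E=M\}\cap K$ is compact and subanalytic. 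If $\dim S\ge 1$, the curve-selection lemma yields a real-analytic arc through a point of $S$, i.e.\ a continuous curve lying in $S$, contradicting Theorem~\ref{th3.1}. Hence $\dim S=0$, and a compact $0$-dimensional subanalytic set is finite.

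\emph{Stage 2 (one orbit).} Because $E$ is invariant under $S_n$, the finite set $S$ is a union of orbits; as best tuples have distinct entries, each orbit has exactly $n!$ points, so $|S|=n!\,m$ and the conjecture is the claim $m=1$. To attack it I would try a continuation argument: join $f$ to a reference $f_0=\sum_{k=1}^n c_kB_k\in L(\bfa_0)$ with $\bfa_0$ a fixed distinct tuple and generic coefficients, for which $M=\|f_0\|^2$ is attained and, by uniqueness of the pole representation of a non-degenerate rational function, attained only on the orbit of $\bfa_0$; then track $S(t)$ along a path $f_t$ from $f_0$ to $f$. If one could show that the maximizing orbit is nondegenerate (isolated and transverse to its $S_n$-orbit) and never collides with a competitor along the path, then $m\equiv 1$.

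The hard part will be Stage 2, and precisely the scenario the homotopy does not control: two \emph{distinct} orbits tying for the global value of $E$. Theorem~\ref{th3.1} forbids only continua of maximizers, not finitely many isolated orbits at the same maximal energy, and the landscape of $E$ is non-convex, so no convexity or operator-simplicity principle is available for free---unlike the $H^\infty$ AAK setting, the $H^2$ problem is known to lack such clean uniqueness. A continuation argument would at best establish $m=1$ for $f$ in the same chamber as $f_0$, i.e.\ generically; upgrading ``generic'' to ``every $f$'' would require ruling out ties globally, for which there is presently no evident mechanism. This is exactly why the statement is posed as a conjecture.
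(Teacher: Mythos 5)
This statement is posed in the paper as a \emph{conjecture}: the authors offer no proof of it, only the heuristic support of Theorem~\ref{th3.1}, so there is no paper proof to compare yours against, and the only question is whether your proposal actually closes the conjecture. It does not, and you say so yourself: Stage~2 is the entire content of the statement, and your continuation argument, as you candidly note, at best controls the count of maximizing orbits for $f$ in the same ``chamber'' as the reference function $f_0$, i.e.\ generically; it has no mechanism to exclude two distinct $S_n$-orbits tying for the global maximum of $E$, which is precisely the open scenario. A proof sketch whose decisive step is acknowledged to be out of reach is a (well-informed) research plan, not a proof, so the proposal must be judged as having a genuine gap --- indeed the same gap that makes the statement a conjecture in the first place.

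Stage~1 is more substantive but also not self-contained. You ``grant'' an a priori bound confining maximizers to a compact $K\subset\bbD^n$ away from $\partial\bbD$ and away from the diagonal; neither confinement is free. Existence of a maximizer of $E$ over $\overline{\bbD^n}$ requires an argument (the classical compactness results for best rational approximation in $H^2$), and keeping the maximizer off the diagonal amounts to assuming the best approximation has exact degree $n$ (nondegeneracy), which can fail for special $f$ --- e.g.\ if $f$ itself is a Blaschke form of order $<n$ --- and is exactly the hypothesis you later need when you claim every orbit in $S$ has $n!$ points. Modulo those inputs, the subanalyticity-plus-curve-selection route to finiteness from Theorem~\ref{th3.1} is a reasonable and genuinely useful observation (the paper itself never upgrades ``no continuous curve'' to ``finite''), but note it rests on Theorem~\ref{th3.1}, whose proof in the paper is itself somewhat sketchy (the step forcing $g$ constant and $f_n(z)=rz^c$ is argued loosely), so even your Stage~1 inherits that fragility. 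In summary: a thoughtful reduction of the conjecture to finiteness plus a single-orbit claim, with the single-orbit claim --- the conjecture proper --- left open.
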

Assume the conjecture holds. By the similar argument given in the proof of \cite[Corollary 4]{Qian13}, we can confirm that a coordinate maximum point for $E(\bfz)$ is also a global maximum one. In this case, on given grids, we can employ CADF to find an $n$-tuple, which is nearest to a best $n$-tuple.

Considering the geometric structure of the unit disk, we suggest making the CAFD search over a \emph{polar grid set}.
\begin{definition}\label{def:3.1}
Let $M>1$ and $N>1$ be two positive integers, and $\epsilon=\frac{1}{M}, \delta=\frac{1}{N}$. A set of polar $\epsilon$-$\delta$ grids on $\bbD$ is the node set
\begin{equation}\label{meshg}
\cG = \{ z; \ z= m\epsilon e^{2n\delta\pi i},\ 1\le m< M, 1\le n\le N\}.
\end{equation}
\end{definition}
Note that the role of CAFD search here is to find an initial $n$-tuple for CAFD-CGD algorithm. To distinguish it from the standard CAFD in \cite{Qian13}, we will call it the \emph{Initial Tuple Selection Algorithm} (ITS),

We introduce the polar grid set because it enables us to adopt Fast Fourier Transform (FFT) to compute $\langle f,e_a\rangle$ over the grid set $\cG$, when $f$ is given as a digital signal. We will discuss the evaluation of $\langle f,e_a\rangle$ in the next section.

In ITS, the search starts from a randomly chosen $n$-vector $\bfa\in \cG^n$. Fixing $a_1,\cdots,a_{n-1}$, the algorithm first finds the maximal-value point $\tilde{a}_n$ for $|\langle f_n, e_z\rangle|$ over the grid set $\cG$. Then, after $a_n$ is replaced by $\tilde{a}_n$ and $\bfa$ is permuted by the 1-shift permutation, the search process above will be repeated till no replacement can be made.

We present ITS in \textbf{Algorithm~\ref{alg2}}. The inputs of the algorithm consist of a function $f\in H^2$, a randomly selected $n$-tuple $\bfa\in \bbD^n$, two parameters $\epsilon>0$ and $\delta>0$ for polar grid set $\cG$, and a tolerance $\eta>0$. It outputs an $n$-tuple.

\begin{algorithm}[htbp]\caption{\textbf{ITS}: Searching initial tuple for CAFD-CGD algorithm} \label{alg2}
     \begin{algorithmic}[1]
    \REQUIRE
    $\bfa, f, \epsilon, \delta,$ and the tolerance $\eta$.
    \STATE Create the polar-type ($\epsilon, \delta$)-grid set $\cG$ on $\bbD$.
    \STATE Randomly select $\bfa=[a_1,\cdots,a_n] \in \bbD^n$ as the starting $n$-tuple for the algorithm.
    \STATE Create the function $f_n$ and compute the partial energy  $V=|\langle f_n, e_{a_n} \rangle|$.
    \STATE Initialize parameter for WHILE loop and set the WHILE-LOOP light $s=1$.
    \WHILE{$s\ne 0$}
        \STATE Reset $s=0$.
        \FOR{$j=1\to n$}
            \STATE Compute $V_t=\max_{z\in\cG} |\langle f_n, e_{z} \rangle|, a_t = \argmax_{z\in\cG} |\langle f_n, e_{z} \rangle|$.
            \IF{$V_t > V + \eta$}
                \STATE Update:  $a_n = a_t, V=V_t, s=s+1$.
            \ENDIF
            \STATE Permute $\bfa$ using $\bfa = P\bfa=[a_n, a_1,\cdots, a_{n-1}]$.
            \STATE Update $f_n$ based on the new $\bfa$.
            \STATE Update $V=|\langle f_n, e_{a_n} \rangle|$.
        \ENDFOR
    \ENDWHILE
    \ENSURE  $\bfa$
     \end{algorithmic}
\end{algorithm}

\section{Fast Evaluation Algorithm (FEVAL)} \label{feval}
Evaluating the inner products $\langle f_n, e_{z} \rangle$ for all $z$ in a mesh grid set costs most time in a search algorithm. For instance, if we make the CAFD search over an $N\times M$ rectangular mesh grid set $\cM$, we need $O(N^2M)$ operations for all evaluations. The polar grid encourages a fast evaluations of all $\langle f_n, e_{z} \rangle$  due to the following theorem.
Define
\begin{equation}\label{fFourn}
\hat{f}(n)=\frac{1}{2\pi}\int_{0}^{2\pi} f(e^{it})e^{-int}\, dt,\quad f\in H^2.	
\end{equation}

\begin{theorem}\label{th:3.2}
Let $z=re^{it}, 0<r<1$, and $f\in H^2$. Then, the inner product $\langle f, e_z\rangle$ has the following representation:
\begin{equation}\label{fe}
\langle f,e_z\rangle =\sqrt{1-r^2}\sum_{k=0}^{\infty}r^k\hat{f}(k)e^{ikt},
\end{equation}
which yields
\begin{equation}\label{fate}
f(z)(=f(re^{it}))==\sum_{k=0}^{\infty}r^k\hat{f}(k)e^{ikt},
\end{equation}
\end{theorem}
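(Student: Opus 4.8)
The plan is to compute the inner product directly by expanding the kernel $e_z$ as a geometric series and exploiting the orthonormality of the monomials $\{w^k\}_{k\ge 0}$ in $H^2$. First I would record two elementary facts. Since $f\in H^2$ admits the expansion $f(w)=\sum_{k=0}^\infty c_k w^k$, the Fourier coefficients defined in (\ref{fFourn}) satisfy $\hat f(k)=c_k$ for $k\ge 0$ and $\hat f(k)=0$ for $k<0$; this is exactly the feature that distinguishes $H^2$ as the subspace of $L^2(\pa\bbD)$ with vanishing negative Fourier modes. Second, with respect to the inner product $\langle\cdot,\cdot\rangle$ the monomials form an orthonormal system, so that $\langle f,w^k\rangle=\hat f(k)$ for every $k\ge 0$.

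Next I would expand the kernel. Writing $z=re^{it}$ with $r<1$, the geometric series gives
\[
e_z(w)=\frac{\sqrt{1-r^2}}{1-\bar z w}=\sqrt{1-r^2}\sum_{k=0}^\infty \bar z^{\,k}w^k,
\]
and conjugating termwise yields $\overline{e_z(w)}=\sqrt{1-r^2}\sum_{k=0}^\infty z^{k}\bar w^{k}$. Because $|z|=r<1$ while $|w|=1$ on $\pa\bbD$, this series converges uniformly on the unit circle. I would then insert this expansion into the contour-integral definition of $\langle f,e_z\rangle$ and interchange the sum with the integral, obtaining
\[
\langle f,e_z\rangle=\sqrt{1-r^2}\sum_{k=0}^\infty z^{k}\langle f,w^k\rangle=\sqrt{1-r^2}\sum_{k=0}^\infty \hat f(k)\,z^{k}.
\]
Substituting $z^k=r^k e^{ikt}$ gives precisely (\ref{fe}).

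Finally, (\ref{fate}) would follow by comparison: the reproducing property of the Szeg\H{o} kernel $\frac{1}{1-\bar z w}$ shows $\langle f,e_z\rangle=\sqrt{1-r^2}\,f(z)$, so dividing (\ref{fe}) by $\sqrt{1-r^2}$ recovers the power-series value $f(z)=\sum_{k=0}^\infty r^k\hat f(k)e^{ikt}$. Equivalently, this last identity is nothing but the Taylor expansion of $f$ evaluated at $z=re^{it}$ combined with $\hat f(k)=c_k$, so the two displayed equations are really two readings of the same coefficient identity.

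The only genuine obstacle is justifying the term-by-term integration, i.e.\ the interchange of the infinite sum with the contour integral. I expect to handle this through the uniform convergence of the kernel's geometric series on $\pa\bbD$ (guaranteed by $r<1$) together with $f\in L^2(\pa\bbD)$; a cleaner alternative is to observe that the partial sums of $e_z$ converge to $e_z$ in the $H^2$ norm and then invoke continuity of the inner product, which avoids any pointwise estimate on $f$. Beyond this point everything reduces to routine bookkeeping with Fourier coefficients.
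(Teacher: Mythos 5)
Your proposal is correct and follows essentially the same route as the paper: the paper also expands the Szeg\H{o}-type kernel geometrically (phrased via $\hat{p}_r(k)=r^k$ for $p_r(z)=\frac{1}{1-rz}$ and the resulting convolution identity, which is your termwise orthonormality computation in different clothing) and likewise deduces \eqref{fate} from the reproducing identity $\langle f,e_z\rangle=\sqrt{1-r^2}\,f(z)$. Your explicit justification of the sum--integral interchange merely fills in a step the paper leaves implicit.
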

\begin{proof}
For $0<r<1$, we write $p_r(z) = \frac{1}{1-rz}$. It is clear that $\hat{p_r}(k)=r^k$. Then
\begin{equation*}
\langle f, e_z\rangle =\frac{1}{2\pi}\int_{0}^{2\pi}\frac{\sqrt{1-r^2}f(e^{i\tau})}{1-re^{it}e^{-i\tau}}\, d\tau =\sqrt{1-r^2}\sum_{k=0}^{\infty}\hat{f}(k)\hat{p_r}(k)e^{ikt},
\end{equation*}
which yields (\ref{fe}). By $\langle f, e_z\rangle=\sqrt{1-r^2}f(z)$, we have (\ref{fate}).
\end{proof}
\smallskip

For an infinite sequence $\bfc=(c_0, c_1, \cdots, c_k, \cdots)\in\ell^2$, we define the scaling operator $\Rfrak_r:\ell^2\to\ell^2$ as follows: Let $\mathbf{d} = \Rfrak_r (\bfc)$. Then
\begin{equation}\label{Rr}
d_k = r^k c_k, \quad k=0,1,2, \cdots.
\end{equation}
We also denote the discrete Fourier transform by $\Ffrak: H^2\to \ell^2$:
\[ \Ffrak (f(e^{it})) = \hat(f)\equiv \left(\hat{t}(k)\right)_{k=0}^\infty.\]
Then its inverse $\Ffrak^{-1}$ has the form of
\[ \Ffrak^{-1}(\hat{f}) = \sum_{k=0}^\infty \hat{f}(k) e^{ikt} (=f(e^{it}).\]
Thus, we can re-write the formula \eqref{fate} as
\[ f(re^{it})= \Ffrak^{-1}\circ\Rfrak_r\circ\Ffrak (f(e^{it})).\]

Based on Theorem~\ref{th:3.2}, we develop the algorithm (FEVAL) for fast evaluating
$\langle f, e_z\rangle$ for all $z\in \cG$. It only needs $O(NM\log N)$ operations.

\begin{algorithm}[htbp]\caption{\textbf{FEVAL}: Evaluating $\langle f,e_z\rangle$ over the mesh grid set $\cG$} \label{alg3}
     \begin{algorithmic}[1]
    \REQUIRE
    $M , N, \epsilon$ and $\bff$.
    \STATE Initialization: Compute $\hat{f}^{(0)}= \mathrm{FFT}(\bff)$ and scale it to $\hat{f}^{(1)}=\Rfrak_\epsilon(\hat{f}^{(0)})$.
    \STATE Output the first row of $\bfF_e$: $F(1,:) =\sqrt{1-\epsilon^2} \mathrm{IFFT}(\hat{f}^{(1)})$.
    \FOR{ m = 2 : M}
        \STATE Compute the Fourier coefficient sequence on the $m$-th circle: $\hat{f}^{(m)}= \Rfrak_{m\epsilon}(\hat{f}^{(1)})$.
        \STATE Output the $m$-th row of $\bfF_e$: $F(m,:) =\sqrt{1-m^2\epsilon^2} \mathrm{IFFT}(\hat{f}^{(m)})$.
    \ENDFOR
    \ENSURE  $\bfF$
     \end{algorithmic}
     \end{algorithm}
{  In \textbf{Algorithm~\ref{alg3}}}, $\bff=[f_1,\cdots,f_N]^T$ is the vector of sample values of $f\in H^2$ at the set $T=\{t_1,\cdots,t_N\}$ with $t_j=e^{\frac{2j\pi i}{N}}$; $\Rfrak_r:\ell^2\to\ell^2$ is defined by \eqref{Rr}; $F(m,n) =\langle f,e_z\rangle$, where $z=(m\epsilon) e^{\frac{2n\pi i}{N}}\in \cG$; and $\bfF =[F(m,n)]_{m,n=1}^{M,N}$.

\section{{Illustrative Examples}}\label{sec4}
In this section, we illustrate the accuracy and effectiveness of our CAFD-CGD algorithm in approximating the functions in $H^2$ and in recovering the tuple of a Blachke form. We also compare it with CAFD\cite{Qian13} to demonstrate its improvement.

We run the experiments using Matlab 2017b on the laptop \emph{Dell Latitude E7440} with 8GB RAM, Intel Core i7-4600U CPU @ 2.10GHz, and Microsoft Windows 10. \par
For making fair comparisons, we sample the functions at $1024$ equidistant points on the unit circle for all experiments. The parameters for CAFD-CGD are set as follows: $t_k=1$ for all $k$ (in \eqref{eq3c}). In ITS, for function approximation, we select $\epsilon=0.01$ and $\delta=2^{-8}$, which produce $25344$ nodes on the polar mesh grid set, while for tuple recovering, we set $\delta=2^{-7}$, which reduce a half of the nodes. In CAFD, we set the grid gap to $0.01$, which produces $30752$ nodes on the rectangular mesh grid set.

For convenience, in all examples we write $\tau=e^{it}$, so that when $t$ is equidistantly sampled on $[0, 2\pi]$, $\tau$ is equidistantly sampled on the complex unit circle.
\subsection{Approximating functions in $H^2$}
In this subsection, we illustrate the accuracy of CAFD-CGD for approximating functions in $H^2$ space. The graphs only show the real part of each complex function.

\noindent\textbf{Example 5.1.} In this example, we approximate three functions: $f_1(\tau)=\frac{1}{2+\tau^4}, f_2(\tau)= e^{\tau^2}, f_3(\tau)=\log(2+\tau^2)$ using $6$-order Blaschke form. The results are shown in Figure~\ref{fig1}, and a comparison of results is in Table~\ref{tab1}. Since $f_1(\tau)$ is a rational function with the order $4$, it is not a surprise that its approximations have the highest accuracy. In all cases, CAFD-CGD performances better and costs less CPU time.
\smallskip

\noindent\textbf{Example 5.2.} In this example, we approximate three functions: $f_1(\tau)=1+\tau^2+\tau^4+ 1/(3+\tau^2), f_2(\tau)= \cos{\tau^2}$ and $f_3(\tau)=\frac{\cos 6\tau^2}{2+\tau^2}$, which have more windings than those in Example 5.1. Therefore, we choose 10-order for the first two and 30-order for the last one. The results are shown in Figure~\ref{fig2}, and a comparison is given in Table~\ref{tab2}. CAFD-CGD still shows the better performance in average and costs less time.

\begin{table}[htbp]
\centering
\caption{\small{Comparison of CAFD-CGD and CAFD in Approximation (Example 5.1)}}
        \begin{tabular}{|c|c|c|c|c|c|c|}
            \cline{1-7}
            \multicolumn{1}{|c|}{Function}  & \multicolumn{2}{|c|} {$1/(2+\tau^4)$} & \multicolumn{2}{|c|} {$e^{\tau^2}$} &  \multicolumn{2}{|c|} {$\log(2+\tau^2)$} \\
            \cline{1-7}
             { }& $L^2$ Error & CPU Time & $L^2$ Error & CPU Time  & $L^2$ Error & CPU Time \\
             \hline
        \textbf{CAFD-CGD} &0.0046\%  & 0.6102 & 0.0875\%  & 2.3746 & 0.1379\% & 0.7794 \\
            \hline
            {CAFD}       & 1.0094\%  & 5.8766 & 0.3463\% & 14.572 &  0.1922\% & 9.6438 \\
            \hline
        \end{tabular}
\label{tab1}
\end{table}

\begin{figure}[htbp]
 	\centering
 	\includegraphics[width=460pt]{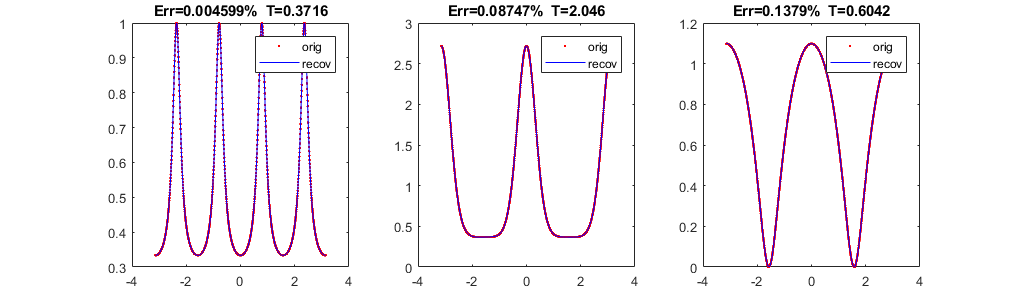}
 	\caption{\small Approximations of $\frac{1}{2+\tau^4},  e^{\tau^2}, \log(2+\tau^2)$.}
 \label{fig1}
\end{figure}

\begin{table}[htbp]
\centering
\caption{\small Comparison of CAFD-CGD and CAFD in Approximation (Example 5.2)}
        \begin{tabular}{|c|c|c|c|c|c|c|}
            \cline{1-7}
            \multicolumn{1}{|c|}{Function}  & \multicolumn{2}{|c|} {$1+\tau^2+\tau^4+ 1/(3+\tau^2)$} & \multicolumn{2}{|c|} {$\cos{\tau^2}$} &  \multicolumn{2}{|c|} {$ \cos 6\tau^2 /(2+\tau^2)$} \\
            \cline{1-7}
              & $L^2$ Error & CPU Time  & $L^2$ Error & CPU Time &  $L^2$ Error & CPU Time \\
            \hline
       \textbf{CAFD-CGD} & 0.0189\%  &1.8335 &0.0704\% & 1.8068 &  0.0134\% & 9.7661 \\
            \hline
             {CAFD} & 0.0141\%  & 26.056 & 0.0747\% & 13.939 & 0.0141\% & 33.342 \\
            \hline
        \end{tabular}
\label{tab2}
\end{table}

\begin{figure}[htbp]
 	\centering
 	\includegraphics[width=460pt]{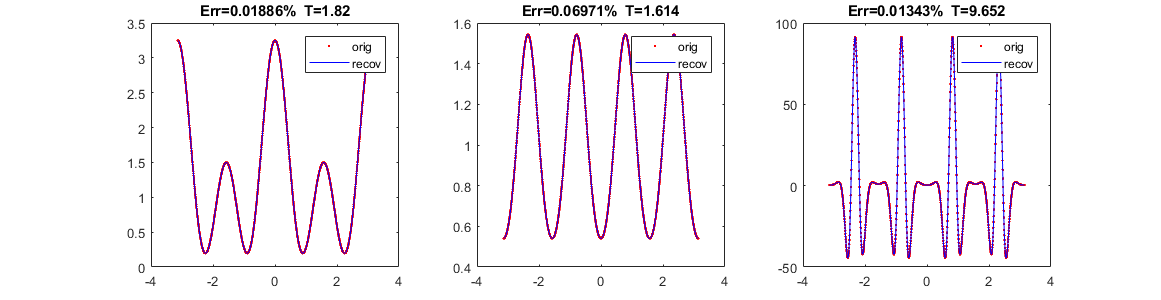}
 	\caption{\small Approximations of $1+\tau^2+\tau^4+ 1/(3+\tau^2),  \cos{\tau^2}, \frac{\cos 6\tau^2}{2+\tau^2}$.}
 \label{fig2}
\end{figure}

\subsection{Recovering Tuple of Blaschke Form}
In this subsection, we recover $n$-tuple $\bfb=[b_1,\cdots,b_n]$ of the $n$-Blaschke form
\begin{equation}\label{blk1}
f(\tau)=\sum_{k=1}^n c_kB_{b_1,\cdots,b_k} (\tau).
\end{equation}
When $\bfb$ is recovered, the coefficient vector $\bfc=[c_1,\cdots,c_n]$ in (\ref{blk1}) can be computed by (\ref{neww1}).
To recover an $n$-tuple, the approximation degree has to be $n$ too. An ideal algorithm should exactly recover $\bfb$. However, a practical algorithm hardly achieves the exact recovery because we only know the values of $f(\tau)$ on the unit circle.

We measure the $n$-tuple recovering error by the tuple distance, which is defined as follows:
Let $\cP$ be the set of all permutations on a vector $\mathbf{u}=[u_1,\cdots, u_n]$. The distance between two $n$-tuples $\mathbf{u}$ and $\mathbf{v}$ in $\bbD^n$ is
\begin{equation}\label{berror}
d(\mathbf{u}, \mathbf{v})=\min_{P\in \cP}\|P\mathbf{u}-\mathbf{v}\|.
\end{equation}
In this subsection, the graphs of complex functions show their real parts too.
\smallskip

\noindent\textbf{Example 5.3.} In this example, the function is the $5$-Blashcke form given by:\par
\noindent $\bfb$=[-0.475+0.305i, -0.180+0.715i, 0.260-0.730i, 0.540+0.360i, -0.485-0.215i].\par
\noindent $\bfc$ = [-0.5861-0.04445i, 0.2428-0.6878i, 0.4423-0.3309i, -0.2703-0.8217i, -0.8085+0.3774i].
The functions and $5$-tuples recovered by CAFD-CGD and CAFD are shown in \textbf{Figure~\ref{fig3}}. The error comparison is given in Table~\ref{tab3} and the comparison of the recovered tuple locations is given in Table~\ref{tab3-2}. The results confirm the advance of CAFD-CGD.
\smallskip

\noindent\textbf{Example 5.4}
In the second example, the function is the 4-Blaschke form created by\\
\noindent $\bfb$= [-0.4900-0.8000i, 0.3100+0.1400i, -0.9400-0.2900i, 0.2300-0.6900i].\\
\noindent $\bfc$= [1.0470+0.55587i, -0.2269-1.1203i, -0.1625-1.5327i, 0.6901-1.0979i].\\
The recovered functions and $4$-tuples are shown in \textbf{Figure~\ref{fig4}}. The error comparison is given in Table~\ref{tab4} and the comparison of the recovered tuple locations is in Table~\ref{tab4-2}. The results shows that RARL2 achieves. From the graph, we can see that the function has two peaks. Hence, the function is numerically singular at these points. Hence, the Lipschitz condition in \eqref{eqLip} for CGD fails (in the numerical sense). Hence, the gradient descent method may diverge. When we check the CAFD-CGD process carefully, we find that the algorithm stops running CGD after ITS is completed. Hence, CAFD-CGD does not work effectively when the function is not smooth enough.
\smallskip

\noindent\textbf{Example 5.5} In this example (also Example 1 in \cite{Qian13}), the function is
created by\\
\noindent$\bfb$= [0.6800+0.5200i; 0.3900+0.8100i;  -0.1300-0.8700i; 0.5500-0.1000i].\\
\noindent$\bfc$= [0.1440+0.5197i; -1.6387-0.0142i; -0.7601-1.1555i; -0.8188-0.0095i].\\
We give the recovered functions and $4$-tuples in \textbf{Figure~\ref{fig5}}. The error comparison is in Table~\ref{tab5} and the comparison of the recovered tuple locations is in Table~\ref{tab5-2}.
In this example, CAFD exactly recover the $4$-tuple because it locates on the grids of CAFD. For this example, CAFD-CGD  gives a satisfactory approximative results. Because the tuple is not on its grids, it dose not exactly recover the tuple.
\smallskip

\noindent\textbf{Example 5.6} In this example (also Example 3 in \cite{Qian13}), the function is the 4-Blaschke form created by\\
\noindent $\bfb$= [-0.1800+0.7700i, -0.0200-0.1800i, 0.1000+0.2400i, 0.1800-0.5300i].\\
\noindent $\bfc$= [0.1097+0.4754i, 1.1287+1.1741i, -0.2900+0.1269i, 1.2616-0.6568i].\\
We display the recovered functions and $4$-tuples in \textbf{Figure~\ref{fig6}}, show the error comparison in Table~\ref{tab6}, and give the comparison of the recovered tuple locations in Table~\ref{tab6-2}. The same conclusion for Example 5.3 is obtained.
\smallskip

\noindent\textbf{Example 5.7} To show the stability of CAFD-CGD, we select twenty $5$-Blaschke forms and twenty $6$-Blascke forms at random as the target functions in this experiment. We also compare it with CAFD. The comparisons for $5$-Blaschke forms and $6$-Blaschke forms are shown in Figure~\ref{fig7-1} and Figure~\ref{fig7-2}, respectively. The statistical data of the recovering is given in Table~\ref{tab7-1} and Table~\ref{tab7-2}. The results show that CAFD-CGD performs stably.

\begin{table}
\centering
\caption{\small{Comparison of recovering errors (Example 5.3)}}
        \begin{tabular}{|c|c|c|c|}
            \hline
            Algorithm  & $L^2$ Relative Error & $5$-Tuple Distance & CPU Time \\
            \hline
            \textbf{CAFD-CGD} & 0.00356\%  & 0.000177 & 0.5415 \\
            \hline
            {CAFD} &0.6813\%  &0.0304 & 41.698 \\
            \hline
        \end{tabular}
\label{tab3}
\end{table}

\begin{table}
\centering
\caption{\small{Comparison of recovered tuples (Example 5.3)}}
        \begin{tabular}{|c|c|c|c|}
            \hline
           Point  & Original  & \textbf{CAFD-CGD} & {CAFD} \\
            \hline
            1  &  -0.4850 - 0.2150i&  -0.4850 - 0.2150i & -0.5000 - 0.2150i \\
            \hline
            2 &  -0.4750 + 0.3050i & -0.4750 + 0.3049i & -0.4750 + 0.3250i \\
            \hline
            3 &  0.5400 + 0.3600i  & 0.5340 + 0.3600i &  0.5450 + 0.3550i  \\
             \hline
            4 &  -0.1800 + 0.7150i & -0.1802 + 0.7150i & -0.1650 + 0.7150i \\
            \hline
            5 & 0.2600 - 0.7300i &  0.2600 - 0.7300i &  0.2550 - 0.7300i  \\
            \hline
        \end{tabular}
\label{tab3-2}
\end{table}

\begin{figure}[htbp]
	\centering
	\subfigure{\includegraphics[width=210pt]{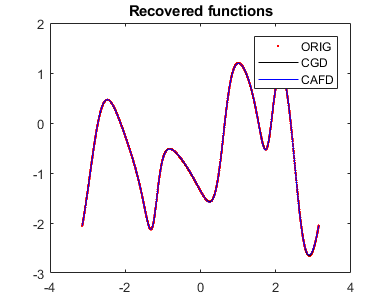}}	
	\subfigure{\includegraphics[width=210pt]{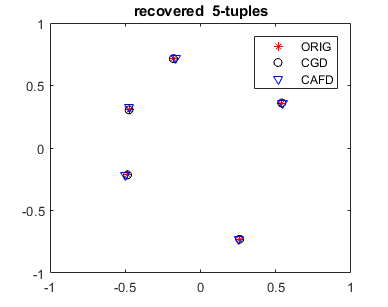}}
	\caption{\small{Recovering function and $5$-tuple in Example 5.3.}}
\label{fig3}
\end{figure}

\begin{table}
\centering
\caption{\small{Comparison of recovering errors (Example 5.4)}}
        \begin{tabular}{|c|c|c|c|}
            \hline
            Algorithm  & $L^2$ Error & $4$-Tuple Distance & CPU Time \\
            \hline
            \textbf{CAFD-CGD} & 9.7127\% & 0.0920 & 0.0891 \\
            \hline
            {CAFD} & 0.5689\%   & 0.0255 & 15.272 \\
            \hline
        \end{tabular}
\label{tab4}
\end{table}

\begin{table}
\centering
\caption{\small{Comparison of recovered tuples (Example 5.4)}}
        \begin{tabular}{|c|c|c|c|}
            \hline
           Point     & Original        & \textbf{CAFD-CGD}   & {CAFD}  \\
            \hline
            1        &  0.3100+0.1400i & 0.3880+0.0972i & 0.3300+0.1250i \\
            \hline
            2        &  0.2300-0.6900i & 0.2493-0.6967i & 0.2300-0.6950i \\
            \hline
            3        & -0.4900-0.8000i &-0.4833-0.8063i &-0.4900-0.8000i  \\
             \hline
            4        & -0.9400-0.2900i &-0.9378-0.2843i &-0.9400-0.2900i \\
            \hline
        \end{tabular}
\label{tab4-2}
\end{table}

\begin{figure}[htbp]
 	\centering
 	\subfigure{\includegraphics[width=210pt]{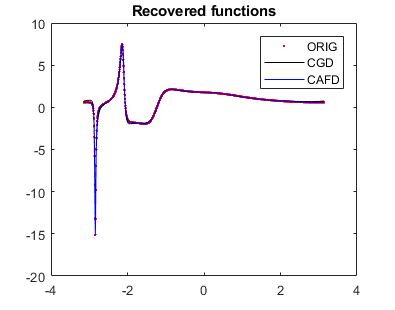}}	
 	\subfigure{\includegraphics[width=210pt]{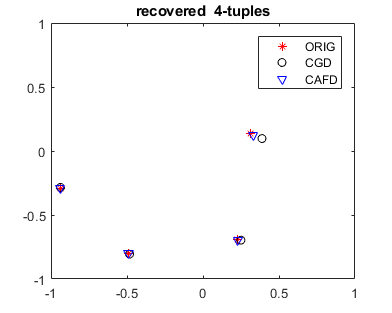}}
 	\caption{\small{Recovering the function and its $4$-tuple in Example 5.4.}}
\label{fig4}
\end{figure}

\begin{table}
\centering
\caption{\small{Comparison of recovering errors (Example 5.5)}}
        \begin{tabular}{|c|c|c|c|}
            \hline
            Algorithm  & $L^2$ Error & $4$-Tuple Distance & CPU Time \\
            \hline
            \textbf{CAFD-CGD} & 0.0052\%   & 0.0001 & 0.2850 \\
            \hline
            {CAFD} & 0.0000\% & 0.0000 & 22.45\\
            \hline
        \end{tabular}
\label{tab5}
\end{table}

\begin{table}
\centering
\caption{\small{Comparison of recovered tuples (Example 5.5)}}
        \begin{tabular}{|c|c|c|c|}
            \hline
           Point & Original        & \textbf{CAFD-CGD}   & {CAFD}   \\
            \hline
            1        &  0.5500-0.1000i & 0.5500-0.0999i & 0.5400-0.1000i \\
            \hline
            2        &  0.6800+0.5200i & 0.6800+0.5200i & 0.6800+0.5200i \\
            \hline
            3        & -0.1300-0.8700i &-0.1300-0.8700i &-0.1300-0.8700i \\
             \hline
            4        &  0.3900+0.8100i & 0.3900+0.8100i & 0.3900+0.8100i \\
            \hline
        \end{tabular}
\label{tab5-2}
\end{table}

\begin{figure}[htbp]
	\centering
	\subfigure{\includegraphics[width=210pt]{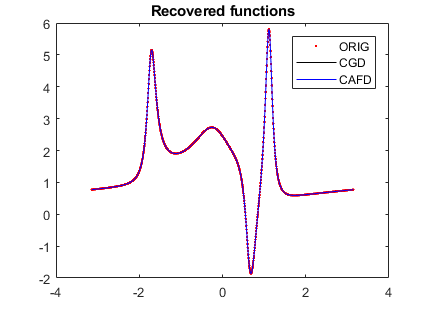}}	
	\subfigure{\includegraphics[width=210pt]{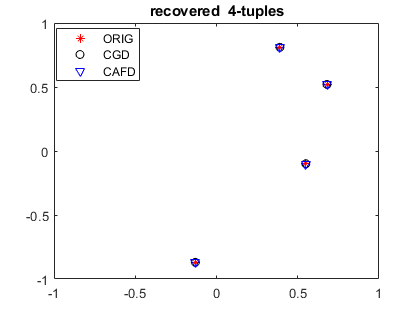}}
	\caption{\small{Recovering the function and its $4$-tuple in Example 5.5.}}
\label{fig5}
\end{figure}

\begin{table}
\centering
\caption{\small{Comparison of recovering errors (Example 5.6)}}
        \begin{tabular}{|c|c|c|c|}
            \hline
            Algorithm  & $L^2$ Error & $4$-Tuple Distance & CPU Time \\
            \hline
            \textbf{CAFD-CGD} & 0.0072\% & 0.0010 & 1.114 \\
            \hline
            {CAFD} & 2.0675\% & 0.2954 & 52.756 \\
            \hline

        \end{tabular}
\label{tab6}
\end{table}

\begin{table}
\centering
\caption{\small{Comparison of recovered tuples (Example 5.6)}}
        \begin{tabular}{|c|c|c|c|}
            \hline
           Point & Original        & \textbf{CAFD-CGD}   & {CAFD} \\
            \hline
            1        & -0.0200-0.1800i &-0.0194-0.1802i & 0.0200+0.3550i \\
            \hline
            2        &  0.1000+0.2400i & 0.0994+0.2403i & 0.0700-0.3700i \\
            \hline
            3        &  0.1800-0.5300i & 0.1799-0.5298i & 0.1700-0.3950i \\
             \hline
            4        & -0.1800+0.7700i & 0.1798+0.7703i &-0.1100+0.7850i \\
            \hline
        \end{tabular}
\label{tab6-2}
\end{table}

\begin{figure}[htbp]
	\centering
	\subfigure{\includegraphics[width=210pt]{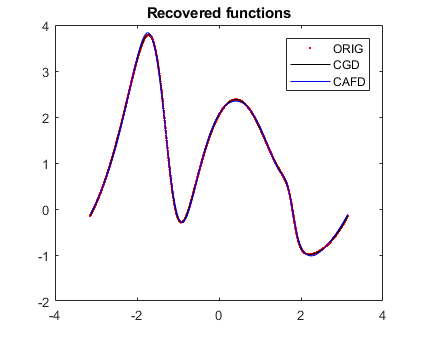}}	
	\subfigure{\includegraphics[width=210pt]{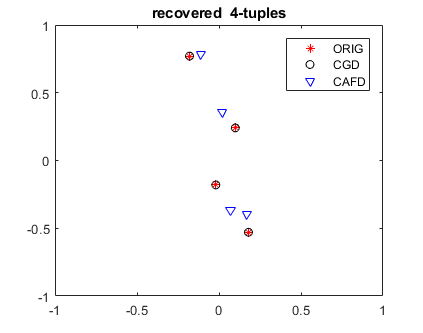}}
	\caption{\small{Recovering the function and its $4$-tuple in Example 5.6.}}
\label{fig6}
\end{figure}

\begin{figure}[htbp]
 	\centering
 	\includegraphics[width=460pt]{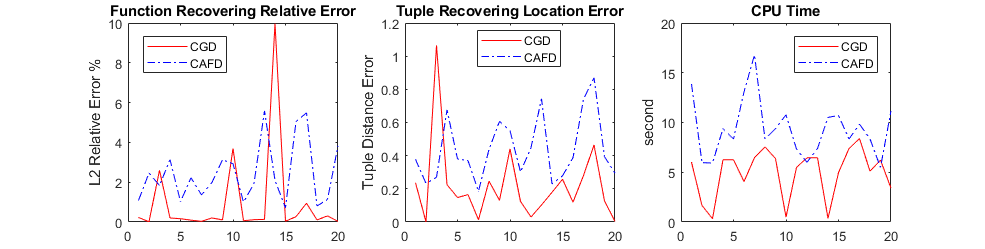}
 	\caption{\small{Recovering randomly selected 20 $5$-Blaschke forms.}}
 \label{fig7-1}
\end{figure}

\begin{figure}[htbp]
 	\centering
 	\includegraphics[width=460pt]{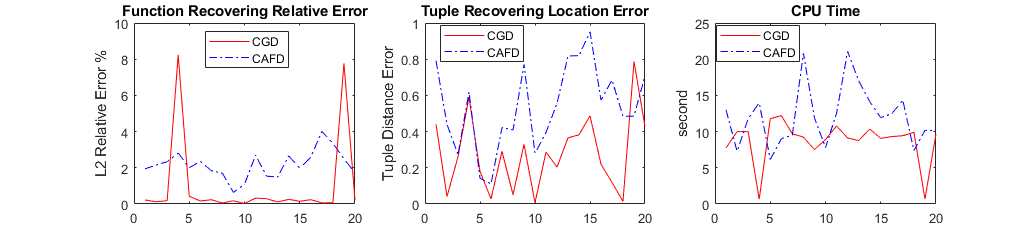}
 	\caption{\small{Recovering randomly selected twenty $6$-Blaschke forms.}}
 \label{fig7-2}
\end{figure}
\medskip

\begin{table}
\centering
\caption{\small{Statistics for recovering 20 $5$-Blaschke.}}
        \begin{tabular}{|c|c|c|c|c|c|c|c|c|c|}
        \cline{1-10}
            \multicolumn{1}{|c|}  { } & \multicolumn{3}{|c|} {$L^2$ Relative Error} & \multicolumn{3}{|c|} {Tuple Distance} & \multicolumn{3}{|c|} {CPU Time}\\
            \cline{1-10}
            {Tuple} & mean & max  & std  & mean & max  & std & mean & max  & std \\
            \hline
            \textbf{CAFD-CGD} & 0.9648\% & 9.9677\% & 2.3215 & 0.218  & 1.065 & 0.23636 & 4.0982 & 7.0615 & 2.0868  \\
            \hline
            {CAFD}       & 2.4142\% & 5.4597\% & 1.5701 & 0.4268 & 0.8560 & 0.1906 & 8.9664 & 14.068 & 2.4139 \\
            \hline
        \end{tabular}
\label{tab7-1}
\end{table}

\begin{table}
\centering
\caption{\small{Statistics for recovering twenty $6$-Blaschke forms.}}
        \begin{tabular}{|c|c|c|c|c|c|c|c|c|c|}
        \cline{1-10}
            \multicolumn{1}{|c|}  { } & \multicolumn{3}{|c|} {$L^2$ Relative Error} & \multicolumn{3}{|c|} {Tuple Distance} & \multicolumn{3}{|c|} {CPU Time}\\
            \cline{1-10}
            {Tuple} & mean & max  & std  & mean & max  & std & mean & max  & std \\
            \hline
            \textbf{CAFD-CGD} & 0.9669\% & 8.2423\% & 2.4106 & 0.2757 & 0.7870 & 0.2087 & 7.0792 & 9.7105 & 2.4354\\
            \hline
            {CAFD}       & 1.9579\% & 4.4610\% & 0.8814 & 0.5294 & 0.9382 & 0.2557 & 11.733 & 18.312 & 3.3778\\
            \hline
        \end{tabular}
\label{tab7-2}
\end{table}
\newpage

\subsection{Conclusion}
Our experiments are based on the functions with 1024 equidistant samples on the complex unit circle. The experiments show that CAFD-CGD method improves CAFD in the Blaschke form approximation. It spends less CPU time while obtains the better approximation in average. However, it is sensitive to the smoothness of target functions. When the gradient of the target function does not satisfy the Lipschitz condition, as shown in Examples 5.4 and 5.5, it can only achieve the similar result as CAFD. Particularly, in Example 5.5, the $4$-tuple is locates on the grid set of CAFD, (but not on the grid set of CAFD-CGD). Hence, CAFD finds its exact location.

CAFD is more stable because it does not utilize the gradient of a function. When the mesh gird is dense enough, or occasionally, a best tuple is very near a node of the mesh grid set, the approximation error is small. However, because its approximative accuracy is limited by the grid gap, usually it cannot reach the same accuracy as CAFD-CGD. Because the standard CAFD searches the best tuple on a rectangular mesh grid set, which cannot adopt FFT algorithm, it spends more CPU time than CAFD-CGD.

In the best tuple recovering, it seems that the closeness of the tuple has a high correlation with the accuracy of $L^2$ approximation, but it is not synchronous: A recovered tuple being closer to the target one does not always produces a better $L^2$ approximation. The relationship between them needs further investigation.

\section*{Acknowledgment}
This work was supported by University of Macau Research Grant MYRG116(Y1-L3)-FST13-QT, Macao Government FDCT 098/2012/A3, and Sam Houston State University Research Grant ERG-250711.

\end{document}